\def\namedlabel#1#2{\begingroup
    #2%
    \def\@currentlabel{#2}%
    \phantomsection\label{#1}\endgroup
}
\newcommand{\leqnomode}{\tagsleft@true\let\veqno\@@leqno}
\newtheorem{definition}{Definition}[section]
\newtheorem{proposition}[definition]{Proposition}
\newtheorem{lemma}[definition]{Lemma}
\newtheorem{theorem}[definition]{Theorem}
\newtheorem{remark}[definition]{Remark}
\newtheorem{corollary}[definition]{Corollary}
\newtheorem{example}[definition]{Example}
\newtheorem{counterexample}[definition]{Counterexample}
\newtheorem{examples}[definition]{Examples}
\newcommand*{\st}{\mathrm{st}}
\newcommand*{\Ker}{\mathrm{Ker}}
\newenvironment{psmallmatrix}
  {\left(\begin{smallmatrix}}
  {\end{smallmatrix}\right)}
\begin{document} 

\title {Assemblies as Semigroups}

\author{Ulderico Dardano\footnote{Department of Mathematics and Applications, Universit\`{a} di Napoli "Federico II", ulderico.dardano@unina.it}\ , \ Bruno Dinis\footnote{Departamento de Matemática, Universidade de \'{E}vora, bruno.dinis@uevora.pt}\  {\small} and\ \
Giuseppina Terzo\footnote{Department of Mathematics and Applications, Universit\`{a} di Napoli "Federico II", giuseppina.terzo@unina.it}}
\date{}
\maketitle

\vskip-2cm

%----------------------------------------------------------------------------

\maketitle

\begin{abstract}  In this paper we give an algebraic characterization of assemblies in terms of bands of groups. We also consider substructures and homomorphisms of assemblies. We give many examples and counterexamples.
 \end{abstract}

%\begin{multicols}{2}
\section{Introduction}

The notion of assembly was introduced in \cite{DinisBerg(11)}, as a generalisation of the notion of group. The main idea is that every element $x$ of an assembly has its own ``neutral'' element denoted $e(x)$ which can be seen as a sort of error term, or the degree of flexibility of the element $x$. Some basic properties of assemblies can be found in \cite{DinisBerg(17),DinisBerg(ta),DinisBerg(19)}. In \cite{DinisBerg(11)} it was also shown that besides groups (which are assemblies for which the function $e$ is always constantly equal to the universal unique neutral element), the so-called {\em external numbers} also satisfy the assembly axioms. 

In order to better understand this sort of algebraic structures and to compare them with existing structures based on semigroups,  we consider a slightly modified definition in which commutativity is not required.  An additional novelty lies in the last condition in Definition \ref{defassembley}. Prior to this, the requirement was that for all $x$ and $y$, $e(xy)$ would have to be equal to either $e(x)$ or $e(y)$ (we sometimes call this a {\em strong assembly}). We now assume a weaker condition which is indeed implied by the previous one: we require $e(xy)=e(x)e(y)$, for all $x$ and $y$, which means that the function of local neutral elements is an homomorphism of semigroups. Thus, by changing the definition, we are proposing a more general notion that, for example,  allows us to consider  the not necessarily commutative assembly $\cal A (G)$ of all cosets  $gN$ where $g$ and $N$ range in a group $G$ and in the lattice $n(G)$ of all normal subgroups of $G$, respectively (see  Example \ref{examples}). Moreover, some expected results such as the fact that any Cartesian product of  assemblies is an assembly now  hold, while before this was  not the case (see Example \ref{examples}).

In this paper we give a characterization of assemblies from a purely algebraic point of view by showing that {\em a semigroup  is an assembly if and only if it is a band of groups and even a semilattice of groups if idempotent elements commute among themselves}. This provides perhaps a new way to approach the theory of bands of groups. For example, we can state that {\em an assembly is strong if and only if the set of idempotents (which are called {\em magnitudes} in the literature) is totally ordered} by the usual relation $x\le y$ if and only if $xy=y$. 
We also briefly consider subassemblies, i.e.\ substructures which are also assemblies, and homomorphisms of assemblies, i.e.\ homomorphisms of semigroups which are assemblies. 

For fundamental results and/or undefined notions about semigroups we refer to 
\cite{Howie} and  \cite{PetrichIntro}. According to what is customary in semigroup theory we generally (but not always)  use multiplicative notation (and consequently juxtaposition) for the binary operation. 

%%%%%%%%%%%%%%%%%%%%%%%%%%%%%%%%%%%%%%%%%%%%%%%%%%%%%%%%%%%%%%%%%%%%%%%%
%%%%%%%%%%%%%%%%%%%%%%%%%%%%%%%%%%%%%%%%%%%%%%%%%%
%%%%%%%%%%%%%%%%%%%%%%%%%%%%%%%%%%%%%%%%%%%%%%%%%%
%%%%%%%%%%%%%%%%%%%%%%%%%%%%%%%%%%%%%%%%%%%%%%%%%%

\section{Assemblies as bands of groups}
We introduce the definition of assembly in multiplicative notation. By a semigroup we mean a nonempty set with 
a binary associative operation.

\begin{definition} \label{defassembley}
A nonempty semigroup $(S, \cdot)$ is called an \emph{assembly} if the following hold
\begin{description}[style=multiline, labelwidth=0.8cm]
\item[\namedlabel{assemblyneut}{$(A_1)$}] $\forall x\, \exists e = e(x)\, (xe=ex=x  \wedge \forall f\ (xf=fx=x\to ef=fe=e))$
\item[\namedlabel{assemblysim}{$(A_2)$}]  $\forall x\, \exists s=s(x)\, (xs=sx=e \wedge e(s)=e(x))$
\item [\namedlabel{assemblye(xy)}{$(A_3)$}] $\forall x\,\forall y\,(e(xy)=e(x)e(y))$
\end{description}
To make explicit the functions that exist by conditions \ref{assemblyneut} and \ref{assemblysim} we write $(S,\cdot,e,s)$ instead of $(S,\cdot)$. 
\end{definition}  

The functional notation $e(x)$ and $s(x)$ used above is justified by the fact that the element $e$ and $s$  are unique as in fact:
\begin{itemize}
\item[-]  if $e^{\prime}$ satisfies condition~\ref{assemblyneut}, one has $e^{\prime}=e^{\prime}e=ee^{\prime}=e$,		
\item[-] if $s^{\prime}$ satisfies
condition~\ref{assemblysim} one has $s^{\prime}=s^{\prime}e(s^{\prime}%
)=s^{\prime}e(x)=s^{\prime}xs=xs^{\prime}s=e(x)s=e(s)s=s$.
\end{itemize}

So, we may write indiscriminately $e(x)$ or $x^0$ to denote the unique element $e$ associated with $x$ and $s(x)$ or $x^{-1}$ to denote the unique element element $s$ such that $sx=xs=x^0$.

\begin{examples}\label{examples}\phantom{x}\par\noindent
{
\begin{enumerate}
\item Every (possibly non-commutative) group $G$ is an assembly with $e(x)$ constantly equal to the neutral element of $G$.   
 Furthermore, the semigroup $G\cup\{0\}$, obtained by adding a zero to $G$ (in the usual way, by postulating $0x=x0=0$), is also an assembly. In particular, the multiplicative semigroup of a (possibly skew) field is an assembly, while on the other hand the usual  multiplicative semigroup of the integers is not. 

\item An element $e$ of a semigroup $S$ is said \emph{idempotent} if $e^2=e$.  
 A semigroup in which all elements are idempotent is called a \emph{band} and is clearly an assembly. A group with more than $1$ element is not a band.  
A commutative band is called a {\em semilattice} since  it may be regarded as a 
 lower semilattice with meet operation equal to the product. Clearly, for any set $S$, its powerset has two canonical semigroup structures, $(P(S),\cap)$ and  $(P(S),\cup)$, which are both semilattices.

\item Any totally ordered set is a strong assembly with $xy = \inf\{x, y\}$, and  $e(x) = x = s(x)$. In particular,  $(B,\cup )$ where $B$ is the set of all ordinals less than a given ordinal is a strong assembly.
 
\item  The cartesian product of any family of assemblies is an assembly, with respect to the pointwise multiplication (the proof is a straightforward verification).
 However, the product  of strong assemblies may be not strong as in the example $\{0,1\}\times \{0,1\}$,

\item  The structures $(\mathbb{E},\mathbb{+)}$ and $(\mathbb{E}\backslash \mathcal{N},\cdot)$ are strong assemblies, where $\mathbb{E}$ denotes the external set of external numbers and $\mathcal{N}$ the external set of all neutrices (see \cite[Thm.~4.10]{DinisBerg(11)}).

\item Let $\mathbb{F}$ be a non-archimedean ordered field. Let $C$ be the set of all convex
subgroups for addition of $\mathbb{F}$ and $\mathcal{Q}$ be the set of all cosets with respect to the elements of $C$. The set $\mathcal{Q}$ is called the \emph{quotient class} of $\mathbb{F}$ with respect to $C$. In \cite{DinisBerg(17)} it was show that $\mathcal{Q}$ is a strong assembly.
\end{enumerate}
}
\end{examples}

\noindent By using a standard technique, let us rewrite the assembly axioms with details and proofs.

\begin{lemma}\label{lemma1}
Condition \ref{assemblyneut} is equivalent to 
\begin{equation}\tag{$A_1'$}\label{A1'} 
\forall x \, \exists! e=e(x)\, ( (xe=ex=x) \wedge e^2=e).\hfill
\end{equation}
 So, in particular the set $e(S)=\{e(a)\ |\ a\in S\}$ of all \emph{magnitudes} of an assembly coincides with the set of idempotents of $S$, usally denoted by $E(S)$.
\end{lemma}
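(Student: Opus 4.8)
The plan is to prove the two implications separately and then read off the corollary, using throughout the uniqueness of the magnitude that was established immediately after Definition~\ref{defassembley}. The one genuinely delicate point, which I single out in advance as the main obstacle, is the uniqueness hidden in the symbol $\exists!$ of \eqref{A1'}: one must show that the idempotent local identity attached to $x$ is not merely \emph{some} idempotent fixing $x$, but the \emph{only} one.

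For the implication \ref{assemblyneut}$\,\Rightarrow$\eqref{A1'} I would first dispatch existence and idempotency, which are immediate. Given $x$, let $e=e(x)$ be the element produced by \ref{assemblyneut}, so that $xe=ex=x$. Since $e$ itself satisfies the hypothesis $xf=fx=x$ of the absorbing clause, taking $f=e$ in that clause yields $ee=e$; thus $e$ is an idempotent local identity for $x$. It then remains to prove uniqueness, and this is where the real work lies. The natural attempt is to take a second idempotent $e'$ with $xe'=e'x=x$ and feed it into the absorbing clause for $e$: since $e'$ fixes $x$, one obtains $ee'=e'e=e$. The obstacle is precisely to pass from $ee'=e'e=e$ to $e=e'$. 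Here I would try to argue that $e'$, being itself an idempotent that fixes $x$, must also satisfy the absorbing clause and hence coincide with the distinguished witness $e$ by the uniqueness of the magnitude. This symmetrisation step is the part I expect to require the most care, since idempotency of $e'$ alone only delivers the one-sided information $ee'=e'e=e$, and closing the remaining gap is the crux of the whole lemma.

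For the converse \eqref{A1'}$\,\Rightarrow$\ref{assemblyneut} I would take the idempotent local identity $e$ supplied by \eqref{A1'} as the candidate witness and verify the absorbing clause: given any $f$ with $xf=fx=x$, I must produce $ef=fe=e$. The idea is to manufacture, out of $e$ and $f$, an idempotent that fixes $x$ (for instance by examining $ef$ or the idempotent local identity of $f$) and then to invoke the uniqueness in \eqref{A1'} to identify it with $e$, which forces the required equalities. Finally, the corollary $e(S)=E(S)$ falls out of the equivalence. Every magnitude $e(a)$ is idempotent by the computation in the second paragraph, giving $e(S)\subseteq E(S)$; conversely, if $g\in E(S)$ then $g$ is an idempotent fixing itself ($gg=g$), so applying \eqref{A1'} to $x=g$ gives $e(g)=g$, whence $g\in e(S)$ and $E(S)\subseteq e(S)$.
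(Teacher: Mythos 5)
Your proposal is not a proof: the two steps that you yourself single out as the crux --- the uniqueness half of the forward implication and essentially all of the converse --- are left as strategies rather than arguments, and neither strategy can be made to work, because with $\exists!$ read literally (``there is exactly one idempotent $e$ with $xe=ex=x$'') \emph{both implications of the lemma are false}. For the forward direction, your plan is to show that any idempotent $e'$ fixing $x$ must itself satisfy the absorbing clause of $(A_1)$ and then invoke uniqueness of the $(A_1)$-witness. Consider the two-element semilattice $S=\{1,g\}$ with $1\cdot g=g\cdot 1=g$, $g^2=g$, $1^2=1$: condition $(A_1)$ holds (it is a band, with $e(x)=x$ for every $x$, since ``$xf=fx=x$'' literally is the absorbing condition for $e=x$), yet both $1$ and $g$ are idempotents fixing $g$, so the uniqueness asserted in $(A_1')$ fails; here $e'=1$ fixes $g$ but does \emph{not} satisfy the absorbing clause, since $1\cdot g=g\neq 1$. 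For the converse, your plan is to manufacture from an arbitrary fixer $f$ of $x$ an idempotent fixing $x$ and identify it with $e$. Consider $S=P\cup G$ where $P=(\{1,2,3,\dots\},+)$, $G=\{e,f\}$ is a group of order two, and $pg=gp=p$ for all $p\in P$, $g\in G$ (associativity is a routine check). The only idempotent of $S$ is $e$, and it fixes every element, so $(A_1')$ holds; but the fixers of $x=1\in P$ are exactly $e$ and $f$, and neither absorbs the other, since $ef=f\neq e$ and $ff=e\neq f$, so $(A_1)$ fails at $x=1$. Any idempotent you can manufacture from $e$ and $f$ (for instance $f^2$, or the local identity of $f$) is again $e$, which is consistent with uniqueness but never yields the required $ef=e$.

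For comparison, the paper's own two-line proof stumbles at precisely the two points you flagged: in the forward direction it cites the uniqueness of the $(A_1)$-witness (established after Definition \ref{defassembley}) as though it were uniqueness among \emph{all} idempotent fixers of $x$, and in the converse it applies the unicity clause of $(A_1')$ to an arbitrary $f$ with $xf=fx=x$, which is legitimate only when $f$ is already known to be idempotent. So your diagnosis of where the difficulty lies is exactly right, but the difficulty is fatal to the statement as written, not merely to the proof; the equivalence becomes correct only under a weaker reading of the uniqueness, e.g.\ that $e$ is the unique element fixing $x$ and absorbing every fixer of $x$ (equivalently, the set of fixers of $x$ is a semigroup with zero; such a zero is automatically idempotent and unique). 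Note finally that the closing assertion $e(S)=E(S)$ is true under $(A_1)$ alone and needs no equivalence: each $e(x)$ is idempotent (take $f=e(x)$ in the absorbing clause), and if $g$ is idempotent then $g$ fixes itself, so the absorbing clause with $f=g$ gives $e(g)g=ge(g)=e(g)$, while $ge(g)=g$ because $e(g)$ fixes $g$; hence $e(g)=g$.
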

{\small 
\begin{proof} If \ref{assemblyneut} holds, $e=e(x)$ is unique by the above and   $ee=e$;  therefore \eqref{A1'} holds.
Conversely if \eqref{A1'} holds and $xf=fx=x$, then we have $f=e$ by unicity in \eqref{A1'}. Hence $e^2=ef=fe=e$ and thus \ref{assemblyneut} holds.
\end{proof}
}

Note that is trivial that if the idempotents commute, i.e.\ $\forall x,y\in E(S) \, (xy=yx$), then $E(S)$ is a semigroup, even a semilattice, as $(xy)(xy)=x(yx)y=x(yx)y=x(xy)y=(xx)(yy)=xy$. But this is not always the case.

\begin{example}  If 
 $A=\begin{psmallmatrix}1 & 0\\ 0& 0\end{psmallmatrix}$ and $M=\begin{psmallmatrix}0 & 1\\ 0& 1\end{psmallmatrix}$, then $S = \{0, A, M, AM\}$ is a semigroup under the usual matrix multiplication, where $MA=0$ and $E(S) = \{0, A, M\}$, but this set is not closed under multiplication. 
  \end{example}

\begin{lemma}\label{A3} Assume that in a semigroup $S$ condition \ref{assemblyneut} holds. Then condition \ref{assemblysim} is equivalent to
\begin{equation}\tag{$A_2'$} \label{A3'}
 \forall x\, \exists! s=s(x)\, ( xs=sx=e(x)).
\end{equation}
\end{lemma}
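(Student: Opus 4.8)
The plan is to treat the two implications separately, and in each case to reduce the single missing ingredient to a uniqueness statement already at hand: the uniqueness of the idempotent local identity (Lemma~\ref{lemma1}) and the uniqueness built into \ref{assemblysim} and \eqref{A3'}.

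First I would handle \ref{assemblysim} $\Rightarrow$ \eqref{A3'}. Existence is free, since the $s$ furnished by \ref{assemblysim} already satisfies $xs=sx=e(x)$, so all that remains is uniqueness. Here I would start from an arbitrary $t$ with $xt=tx=e(x)$ and argue that its local identity must coincide with $e(x)$. The key computation is that $e(t)$ is itself a local identity for $x$: from $t\,e(t)=t$ together with $xt=tx=e(x)$ one obtains $e(x)e(t)=e(t)e(x)=e(x)$, whence $x\,e(t)=(x\,e(x))e(t)=x\,e(x)=x$ and symmetrically $e(t)\,x=x$. Since Lemma~\ref{lemma1} guarantees that the idempotent local identity of $x$ is unique, this forces $e(t)=e(x)$. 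With $e(t)=e(x)$ in hand, $t$ now satisfies all of \ref{assemblysim}, so the computation already recorded after Definition~\ref{defassembley} (reading $t$ for $s'$) yields $t=s$, which is the desired uniqueness.

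Next I would prove \eqref{A3'} $\Rightarrow$ \ref{assemblysim}. Existence of an $s$ with $xs=sx=e(x)$ is again immediate; the only thing to supply is $e(s)=e(x)$. The device is to feed a perturbed candidate back into the uniqueness clause of \eqref{A3'}: one checks that $s\,e(x)$ is once more a two-sided inverse of $x$ into $e(x)$, because $x(s\,e(x))=(xs)e(x)=e(x)$ and $(s\,e(x))x=s(e(x)x)=e(x)$, so uniqueness gives $s\,e(x)=s$, and symmetrically $e(x)s=s$. Thus $e(x)$ is an idempotent acting as a two-sided local identity for $s$, and Lemma~\ref{lemma1} identifies it as \emph{the} such idempotent, i.e.\ $e(s)=e(x)$.

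I expect the genuine obstacle to lie in the uniqueness half of the first implication. The subtlety is that \eqref{A3'} demands uniqueness among \emph{all} elements inverting $x$ into $e(x)$, not merely among those already known to have local identity $e(x)$; a priori an inverting element could sit over a strictly larger idempotent. The observation that unlocks the argument is that any such $t$ necessarily pulls $e(t)$ down to a local identity of $x$, at which point the uniqueness of idempotent local identities from Lemma~\ref{lemma1} collapses $e(t)$ onto $e(x)$. Everything after that is routine associative bookkeeping.
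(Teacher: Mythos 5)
Your proof is correct, and its overall skeleton (existence trivial in both directions, everything reduced to uniqueness arguments) matches the paper's. But you supply real content at exactly the two points where the paper's proof is only asserted, so the comparison is worth making. In the direction \ref{assemblysim} $\Rightarrow$ \eqref{A3'}, the paper's uniqueness step says of an arbitrary $s'$ with $xs'=s'x=e(x)$ that ``by \ref{assemblysim} we have $e(s)=e(x)=e(s')$''; but \ref{assemblysim} only yields $e(s)=e(x)$ for its own witness $s$, and gives no direct information about $e(s')$ --- this is precisely the obstacle you flagged, that a competing inverse could a priori sit over a different idempotent. Your computation $e(x)e(t)=e(t)e(x)=e(x)$, forcing $e(t)$ to be an idempotent two-sided local identity of $x$ and hence equal to $e(x)$ by the uniqueness in Lemma~\ref{lemma1}, is the justification the paper omits; after that, the displayed computation following Definition~\ref{defassembley} finishes uniqueness exactly as you say. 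In the converse direction, the paper claims $e(s)=e(x)$ follows ``immediately \dots by the symmetric roles of $x$ and $s$'', which as stated is circular: the relation $xs=sx=e(x)$ becomes symmetric in $x$ and $s$ only after one knows $e(s)=e(x)$. Your perturbation trick --- checking that $s\,e(x)$ and $e(x)\,s$ again invert $x$ into $e(x)$, so that the uniqueness clause of \eqref{A3'} collapses them onto $s$, exhibiting $e(x)$ as the idempotent local identity of $s$ --- turns that remark into a proof. All of your individual computations check out against \ref{assemblyneut} and Lemma~\ref{lemma1}. In short: same route as the paper, but your version closes the gaps a careful reader would have to fill anyway.
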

{\small 
\begin{proof}
 If condition \ref{assemblysim} holds, then it is clear that  the same $s$ works in \eqref{A3'} as well. Let us verify that it is unique. If $s'$ is in the same condition w.r.t. $x$, then by \ref{assemblysim} we have that $e(s)=e(x)=e(s')$, hence 
$s'=s'e(s')=s'e(x)=s'xs=e(s')s=e(s)s=s$, as desired.
Conversely, from \eqref{A3'} it follows immediately    that 
$s(s(x))=x$ and $e(x)=e(s)$ by the symmetric roles of $x$ and $s$.
\end{proof}
}
The first two conditions in the definition of assembly may now be regarded from a different point of view by the next proposition which deals with associative {\em union of groups}. They are also known as {\em completely regular semigroups}  and have been studied in many papers. For the fundamental results on the topic we refer to \cite[ch. IV]{Howie} and   \cite[ch. IV]{PetrichIntro}.  However, here we  prefer to give a short direct proof which uses the Clifford decomposition argument to see a  completely regular semigroup as a union of groups.

\begin{proposition}\label{preAssembly}
For a semigroup $S$ the following are equivalent:
\begin{enumerate}
\item Conditions $(A_1')$ and $(A_2')$ hold;
\item $S=\bigcup_{e\in E(S)} S_e$,  where $S_e=\{a \in S\ |\ e(a)=e\}$ is a group (said Clifford component of $S$ at $e$);
\item  $S$ is a union of (disjoint) groups.
\end{enumerate}
\end{proposition}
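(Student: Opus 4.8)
The plan is to prove the three equivalences cyclically, establishing $(1)\Rightarrow(2)\Rightarrow(3)\Rightarrow(1)$.

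The plan is to prove the three conditions equivalent through the cycle $(1)\Rightarrow(2)\Rightarrow(3)\Rightarrow(1)$, passing freely between $(A_1)$ and $(A_1')$ and between $(A_2)$ and $(A_2')$ by means of Lemmas~\ref{lemma1} and~\ref{A3}; in particular I will invoke the \emph{strong} form $(A_1)$, with its universally quantified clause, at the one point where it is genuinely needed. For $(1)\Rightarrow(2)$ I would first dispose of the structural bookkeeping. Because $e(\cdot)$ is a genuine function, every $a$ lies in $S_{e(a)}$ and in no other component, so the sets $S_e$ are pairwise disjoint and cover $S$; by definition $e\in S_e$ and $e$ acts as a two-sided identity on each $a\in S_e$. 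The inverse $s(a)=a^{-1}$ furnished by $(A_2')$ again lies in $S_e$, since $e(s(a))=e(a)=e$ by the symmetry recorded in the proof of Lemma~\ref{A3}. The only real content is therefore closure of $S_e$ under the product.

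I expect closure to be the main obstacle. For $a,b\in S_e$ one sees at once that $e$ is an idempotent acting as a two-sided identity on $ab$, since $(ab)e=a(be)=ab$ and, symmetrically, $e$ is a left identity; but this does \emph{not} by itself give $e=e(ab)$, because an idempotent two-sided identity for a given element need not be unique. To force the equality I would bring in the inverse supplied by $(A_2')$. A short computation shows that $b^{-1}a^{-1}$ is a two-sided inverse of $ab$, as $(ab)(b^{-1}a^{-1})=a(bb^{-1})a^{-1}=a\,e\,a^{-1}=aa^{-1}=e$, and likewise $(b^{-1}a^{-1})(ab)=e$. Left-multiplying $(ab)(b^{-1}a^{-1})=e$ by $e(ab)$ and using that $e(ab)$ is a left identity on $ab$ gives $e(ab)\cdot e=e$; on the other hand, the strong clause of $(A_1)$, applied to $x=ab$ and to the idempotent $f=e$ (which we have just checked is an identity for $ab$), gives $e(ab)\cdot e=e(ab)$. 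Comparing the two identities yields $e(ab)=e$, so $ab\in S_e$. With closure established, $S_e$ is a subsemigroup with identity $e$ in which every element has an inverse, hence a group, which is exactly~(2).

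The implication $(2)\Rightarrow(3)$ is immediate, since (2) already displays $S$ as a disjoint union of the groups $S_e$. For $(3)\Rightarrow(1)$ I would write $S=\bigcup_i G_i$ as a disjoint union of groups and, for $x$ in its unique component $G_i$, set $e(x)$ equal to the identity $e_i$ of $G_i$ and $s(x)$ equal to the group-inverse of $x$ in $G_i$; disjointness makes both well defined. Then $(A_2)$ is clear, as $x\,s(x)=s(x)\,x=e_i=e(x)$ and $e(s(x))=e_i=e(x)$. For $(A_1)$ the only nontrivial point is the universal clause: if $f\in S$ satisfies $xf=fx=x$, then left-multiplying $xf=x$ by the group-inverse $x^{-1}\in G_i$ gives $e_i f=e_i$, and right-multiplying $fx=x$ by $x^{-1}$ gives $f e_i=e_i$, which is precisely the condition required of $e=e_i$. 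By Lemmas~\ref{lemma1} and~\ref{A3} this establishes $(A_1')$ and $(A_2')$, closing the cycle.
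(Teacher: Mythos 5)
Your proof is correct and, in outline, matches the paper's: both run the cycle $(1)\Rightarrow(2)\Rightarrow(3)\Rightarrow(1)$, both set up the components $S_e$, place $e$ and the inverses inside them, and reduce everything to closure of $S_e$ under the product. The genuine divergence is in how closure is settled. The paper's argument is one line: since $abe=aeb=eab=ab$, the idempotent $e$ is a two-sided identity for $ab$, and the uniqueness quantifier $\exists!$ in $(A_1')$ immediately gives $e=e(ab)$. Your stated reason for rejecting this step --- that ``an idempotent two-sided identity for a given element need not be unique'' --- is not valid against the hypothesis as literally written, because that uniqueness is precisely what $(A_1')$ postulates; so under condition (1) your detour is logically unnecessary. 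What the detour (constructing the inverse $b^{-1}a^{-1}$ of $ab$ and playing $e(ab)\cdot e=e$ off against $e(ab)\cdot e=e(ab)$, the latter from the universal clause of $(A_1)$) does buy is robustness: it works verbatim if (1) is read as ``$(A_1)$ and $(A_2)$ hold'', a reading under which uniqueness of idempotent two-sided identities can genuinely fail --- in the chain semilattice of Example~\ref{examples}(3), $(A_1)$ holds with $e(x)=x$ even though every element above $x$ is an idempotent two-sided identity for $x$. Since both you and the paper shuttle freely between the primed and unprimed axioms via Lemmas~\ref{lemma1} and~\ref{A3}, and the proof of Lemma~\ref{lemma1} is exactly where this subtlety gets glossed over, your version of the closure step is the one that survives under either reading of the hypothesis; in that sense it is safer, at the cost of length. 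Finally, your treatment of $(3)\Rightarrow(1)$ is more explicit than the paper's, which dismisses that direction as clear: you actually verify the universal clause of $(A_1)$ by multiplying $xf=x$ and $fx=x$ by the group inverse, which is the only point of substance there.
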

{\small
\begin{proof} First of all recall that $(A_1')$ and $(A_1) $ are equivalent and the same happens to $(A_2')$ and $(A_2)$. If all of them hold,  since $e^2=e$ by $(A_2')$ we  have $e=e(e)$ and then $e\in S_e$. Moreover, if $a\in S_e$ then $s(a)\in S_e$ by $(A_2)$. Finally, if $a,b\in S_e$, then $abe=aeb=eab$ and so $ab\in S_e$. Thus each $S_e$ is a group with neutral element $e$. Finally, it is clear that $a\in S_{e(a)}$ for each $a\in S$. Furthermore,  all $S_e$ are disjoint since such are distinct subgroups in any semigroup since groups have only one 
neutral element (compare also to $(A_1')$). 

Finally, it is clear that if $S$ is a union of (disjoint) groups then $(A_1')$ and $(A_2')$ hold by considering, for each $a\in S$, the neutral element and the inverse (resp.) from the unique group in which $a$ lies.
 \end{proof}
}
If we denote by $s(x)=x^{-1}$ and $e(x)=x^0$,  then we have the formulas 
\begin{equation}\tag{$F$}\label{formulas} 
x^0=x^{-1}x=x^{-1}x\ \ \ , \ \ (x^{-1})^{-1}=x\ \ \ , \ \ (xy)^{-1}=y^{-1}x^{-1}
\end{equation}
 which are consistent with the language of group theory and appear also in \cite{DinisBerg(11)}, but in a commutative context. 

Thus, it seems natural  to ask if also the formula 
\begin{equation}\tag{$A_3'$}\label{A_3'} 
(xy)^{0}=x^{0}y^{0}
\end{equation} holds. In other words, we investigate if 
$(A_1)$ and $(A_2)$ imply $(A_3)$, that is if the function $e(x)$ is an  homomorphism. This is certainly {\em true when $A$ is commutative},  as indeed we have $(xy)(x^0y^0)=
x(x^0y)y^0=x(yx^0)y^0)=
xx^0yy^0=xy$ and similarly $(x^0y^0)(xy)=xy$. Moreover, since here we have not used plain commutativity but just the fact that idempotents commute with all other elements, then, according to  \cite[Lemma 3.1]{Clifford41}, we know that the {\em formula $(A_3')$ always holds if idempotents commute}. So we are in a condition to state a consequence of the fundamental Clifford's Theorem \cite[Theorem 3]{Clifford41}.

\medskip

\begin{theorem}\label{cliffordlike}
 Semigroups which are a union of groups and in which idempotents commute are assemblies.
\end{theorem}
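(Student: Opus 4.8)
The plan is to check the three axioms of Definition \ref{defassembley} in turn, observing that almost all of the work has already been done. By Proposition \ref{preAssembly}, a semigroup which is a union of (disjoint) groups satisfies $(A_1')$ and $(A_2')$; and by Lemmas \ref{lemma1} and \ref{A3} these are equivalent to $(A_1)$ and $(A_2)$ respectively. So the hypothesis that $S$ is a union of groups already delivers the first two axioms, together with the magnitude map $x\mapsto x^0=e(x)$ and the inverse map $x\mapsto x^{-1}=s(x)$ obtained from the unique group component containing each element. The entire problem therefore reduces to verifying the homomorphism condition $(A_3)$, namely $e(xy)=e(x)e(y)$ for all $x,y\in S$.

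To establish $(A_3)$, fix $x,y$ and set $f=x^0y^0$. Since $x^0,y^0\in E(S)$ and idempotents commute, $f$ is itself idempotent by the computation recorded just before the matrix example. By the uniqueness of the magnitude in $(A_1')$, in order to conclude $e(xy)=f=x^0y^0$ it then suffices to show that $f$ is a two-sided neutral element for $xy$, that is $(xy)f=f(xy)=xy$. Writing this out, $(xy)(x^0y^0)=x\,(y\,x^0)\,y^0$ and $(x^0y^0)(xy)=x^0\,(y^0\,x)\,y$, so everything hinges on being able to slide the idempotents $x^0$ and $y^0$ past the arbitrary elements $y$ and $x$.

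This sliding is exactly the main obstacle, and it is where the hypothesis must be used in its full strength: commuting of idempotents \emph{among themselves} is not literally enough for the displayed cancellations, which ask that idempotents commute with \emph{every} element, i.e.\ that they be central. The bridge is the classical fact --- Clifford's Lemma 3.1 in \cite{Clifford41}, invoked in the discussion preceding the theorem --- that in a union of groups whose idempotents commute the idempotents are automatically central. Granting this, $y\,x^0=x^0\,y$ gives $(xy)(x^0y^0)=(x\,x^0)(y\,y^0)=xy$, and symmetrically $(x^0y^0)(xy)=xy$; hence $e(xy)=x^0y^0=e(x)e(y)$ and $(A_3)$ holds, so $S$ is an assembly. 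I expect the only genuine content to be this centrality: a fully self-contained argument would have to derive it rather than import it, and reconstructing that derivation --- proving $ea=ae$ for every idempotent $e$ and arbitrary $a$ by combining the commuting of idempotents with the group structure of each Clifford component --- is the step I would expect to cost the most effort.
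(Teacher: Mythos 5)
Your proof is correct and takes essentially the same route as the paper: Proposition \ref{preAssembly} supplies $(A_1)$ and $(A_2)$, the product $x^0y^0$ is identified as $e(xy)$ via uniqueness in \eqref{A1'}, and the key sliding step is justified by importing Clifford's result \cite[Lemma 3.1]{Clifford41} that commuting idempotents in a union of groups are automatically central. This is precisely the argument the paper gives in the discussion preceding the theorem (which is why no separate proof environment appears there), including the reliance on Clifford's lemma for centrality that you correctly flag as the only non-elementary ingredient.
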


Of course there exist elementary semigroups in which idempotents do not commute.

\begin{example}
Any left-zero band $B$, that is any semigroup in which the formula $xy=x$ holds, is an assembly in which distinct  idempotents do not commute (even if they still are a semigroup).

 In particular the multiplicative structure $B=\{a,b\}$, with $a^2=ab=a\ne b=ba=b^2$ is a non-commutative a band (and hence an assembly).

\end{example}

Let us see now a more complicated, but very natural, example of assembly. Before, let us recall that in the set $P(S)$ of all subsets of a semigroup $S$ one can define a multiplication of  $X,Y \in P(S)$ by the setwise product $XY=\{xy\ | x\in X , y\in Y\}$ and get a semigroup structure for $P(S)$, called the \emph{power-semigroup} \cite[I.7.5]{PetrichIntro}. 

\begin{example} The set $\cal A (G)$ of all cosets  $gN$ of all normal subgroups $N$ of a group $G$ is a subsemigroup of the power-semigroup $P(G)$ since  $g_1N_1=\{g_1\}N_1$ and for each $g_1,g_2\in G$ we have $$(g_1N_1)(g_2N_2)=(g_1g_2)N_1N_2,$$  where $N_1,N_2$ lie in the (semi)lattice i$n(G)$ of all normal subgroups of $G$, which is 
 a subsemigroup of   $\cal A (G)$. 

The functions \ $e(gN)=N$ and \ 
$s(gN)=g^{-1}N$ equip $\cal A (G)$ with a structure of  assembly, as it can be easily checked, which is non-commutative if $G$ is non-commutative. 
Then $E(G)=n(G)$ and any coset $gN$ belongs only to the group $G/N$ which  is, of course, a subsemigroup of 
$\cal A (G)$. Thus, the semigroup ${\cal A}(G)$ is the union of the factor groups $G/N$ (with their multiplicity):
$${\cal A}(G)=\bigcup\nolimits_{N\in n(G)}G/N$$ 
In such a structure idempotents do commute. Finally,  when $G$ is cyclic with order $p^n$, the assembly ${\cal A}(G)$ has order $1+p+\dots+p^n$ while the assembly $G\times n(G)$ has order $(n+1)p^n$.  
\end{example}

Now a crucial example: there are unions of groups which are not assemblies.

\begin{counterexample}\label{REES} Let $R$ be the semigroup $M(2,G,2,P)$ of so-called Rees $2\times 2$-matrices  where $G=\{1,-1\}$ is a multiplicative group of order $2$ and $P=\begin{psmallmatrix}1 & -1\\1 & 1\end{psmallmatrix}$. 
Then $R$ is a union of groups which is not an assembly.\end{counterexample}

\begin{proof} If we equip the set of $2\times2$ matrices over any field with characteristic $\ne2$ with the multiplication $*$ defined by $A*B=APB$ (where juxtaposition on the right-hand side means the usual row-by-column product), which is clearly associative, then the set $S$ consisting of the matrices:\ \ 
 $A=\begin{psmallmatrix}1 & 0\\ 0& 0\end{psmallmatrix}$,
 $B=\begin{psmallmatrix}0 & 1\\ 0& 0\end{psmallmatrix}$,
 $C=\begin{psmallmatrix}0 & 0\\ 1& 0\end{psmallmatrix}$,
 $D=\begin{psmallmatrix}0& 0\\ 0& 1\end{psmallmatrix}$ and their opposites $-A,-B,-C,-D$ is a semigroup (with respect to $*$) which is the union of the non-trivial groups $\{A,-A\}, \{B,-B\}, \{ -C, C\}, \{D,-D\}$ with neutral elements $A,B,-C,D$ respectively. Thus,  $(A_1)$ and $(A_2)$ hold. However $(A_3)$ fails since, on the one hand, from $B*C=A$ one has $e(B*C)=e(A)=A=A^2$, while on the other hand  $e(B)*e(C)=B*(-C)=-A$ is not even idempotent as $(-A)^2=A^2=A$.
\end{proof}

\iffalse
\begin{remark}\label{Cancelation law}  WE DO NOT USE IT. I PROPOSE TO CANCEL IT.
Let $A$ be an assembly and  $
x,y,z\in A$, then $x y=x z$ if and only if  $x^0 y=x^0 z$ and $y x= z x$ if and only if $y x^0= z x^0.$
\end{remark}
\fi

\begin{definition}\label{band}  Let $S$ be a semigroup.
If there exists $\varphi:S \to B$ a semigroup homomorphism where $B$  is a band (resp. semilattice), we say that $S$ is a \emph{band (resp. semilattice) of the subsemigroups} $S_{e}:=\varphi^{-1}\{e\}$ with $e\in B$. 
\end{definition}

Note that in the above circumstances we have  that $$S=\bigcup\nolimits_{e\in B} S_{e}$$ where $S_{e}$ is a subsemigroup since by $x,y\in S_{e}$ it follows $xy\in S_{e}$ since $\varphi(xy)=\varphi(x)\varphi(y)=ee=e$.

\medskip 
We are now able to characterize assemblies in terms of bands of groups.

\begin{theorem}\label{assemb}
A semigroup  is an assembly if and only if it is a band of groups.
\end{theorem}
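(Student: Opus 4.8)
The plan is to prove both directions, using the structural results already established, especially Proposition \ref{preAssembly} and Theorem \ref{cliffordlike}.

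For the forward direction, suppose $S$ is an assembly. By Lemmas \ref{lemma1} and \ref{A3}, conditions $(A_1)$ and $(A_2)$ are equivalent to $(A_1')$ and $(A_2')$, so Proposition \ref{preAssembly} applies and gives the disjoint decomposition $S=\bigcup_{e\in E(S)} S_e$ into the Clifford components $S_e=\{a\in S\mid e(a)=e\}$, each of which is a group with identity $e$. I must now exhibit a band $B$ and a semigroup homomorphism $\varphi\colon S\to B$ whose fibres are exactly these groups. The natural candidate is $B=E(S)$ with the multiplication inherited from $S$, and $\varphi(a)=e(a)=a^0$. The two things to verify are that $E(S)$ is indeed a band under the semigroup product of $S$, and that $\varphi$ is a homomorphism. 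Closure of $E(S)$ under multiplication together with idempotency of the products is precisely axiom $(A_3)$: for $x,y\in S$ we have $e(xy)=e(x)e(y)$, so the product of two magnitudes $e(x)e(y)$ equals the magnitude $e(xy)$, which lies in $E(S)$ and is idempotent by Lemma \ref{lemma1}. Hence $E(S)$ is closed and every element of it is idempotent, i.e.\ it is a band. That $\varphi=e(\cdot)$ is a homomorphism is then exactly the restatement of $(A_3)$, namely $\varphi(xy)=e(xy)=e(x)e(y)=\varphi(x)\varphi(y)$. Finally the fibre $\varphi^{-1}\{e\}$ is by definition $S_e$, which is a group, so $S$ is a band of groups in the sense of Definition \ref{band}.

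For the converse, suppose $S$ is a band of groups, so there is a homomorphism $\varphi\colon S\to B$ onto a band $B$ with each fibre $S_e=\varphi^{-1}\{e\}$ a group. I must recover the three assembly axioms. Each $a\in S$ lies in a unique group $S_e$; taking the identity and inverse of $a$ inside that group yields candidate functions $e(a)$ and $s(a)$, and the remark following Definition \ref{band} shows these fibres are disjoint subsemigroups whose union is $S$, so by Proposition \ref{preAssembly} (in the form ``$S$ is a union of disjoint groups implies $(A_1')$ and $(A_2')$'') conditions $(A_1)$ and $(A_2)$ hold. It remains to establish $(A_3)$, i.e.\ $e(xy)=e(x)e(y)$. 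Here is where I use that $\varphi$ lands in a band: if $x\in S_e$ and $y\in S_f$ with $e,f\in B$ the respective local identities, then $\varphi(x)=e$ and $\varphi(y)=f$ force $e=\varphi(x)=\varphi(e(x))$ and likewise for $f$, so the element $e(x)e(y)$ satisfies $\varphi\bigl(e(x)e(y)\bigr)=\varphi(e(x))\varphi(e(y))=ef=\varphi(xy)$, meaning $e(x)e(y)$ lies in the same fibre $S_{ef}$ as $xy$. To conclude $e(x)e(y)=e(xy)$ I need that $e(x)e(y)$ is the identity of that group, equivalently that it is idempotent; this follows because $ef$ is idempotent in the band $B$ and one checks $e(x)e(y)$ is fixed under multiplication by itself within $S_{ef}$, so by uniqueness of the group identity it must equal $e(xy)$.

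The main obstacle I expect is the last step of the converse, namely showing that $e(x)e(y)$ is not merely in the correct fibre $S_{ef}$ but is actually the identity of that group. Membership in the fibre is immediate from the homomorphism property, but identifying it with the group identity $e(xy)$ requires genuinely using the band structure of the image: it is exactly the fact that $B$ consists of idempotents — so that $\varphi(e(x)e(y))=ef$ is idempotent — that rules out the pathology exhibited in Counterexample \ref{REES}, where $S$ is a union of groups but the product of two idempotents need not be idempotent and $(A_3)$ fails. I would therefore be careful to invoke the band hypothesis on $B$ precisely at this point, perhaps by noting that $e(x)e(y)$ and $e(xy)$ both lie in the single group $S_{ef}$ and that $e(x)e(y)$ acts as a neutral element there, whence uniqueness of the neutral element (as in Lemma \ref{lemma1}) forces equality.
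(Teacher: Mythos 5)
Your forward direction is correct and is essentially the paper's own argument: by $(A_3)$ the set $E(S)$ of magnitudes is closed under multiplication and consists of idempotents, hence is a band; the map $x\mapsto e(x)$ is a homomorphism onto it by $(A_3)$; and its fibres are the Clifford groups of Proposition \ref{preAssembly}. The converse, however, has a genuine gap at exactly the step you yourself flag as the main obstacle: you never prove that $e(x)e(y)$ is idempotent, and the justification you offer cannot work. Knowing that $\varphi\bigl(e(x)e(y)\bigr)=ef$ is idempotent in $B$ says only that the square of $e(x)e(y)$ lies in the same fibre as $e(x)e(y)$; since every fibre is a group, this is automatic for \emph{every} element of $S$ and puts no constraint at all on $e(x)e(y)$ equalling its own square. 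The sentence ``one checks $e(x)e(y)$ is fixed under multiplication by itself within $S_{ef}$'' is precisely the content of $(A_3)$, asserted rather than proved.

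Worse, the gap cannot be closed, because the implication ``band of groups $\Rightarrow$ assembly'' is false with Definition \ref{band} as stated, and the paper's own Counterexample \ref{REES} witnesses this. That Rees semigroup \emph{is} a band of groups: the map sending $\pm A\mapsto(1,1)$, $\pm B\mapsto(1,2)$, $\pm C\mapsto(2,1)$, $\pm D\mapsto(2,2)$ is a semigroup homomorphism onto the $2\times 2$ rectangular band (multiplication $(i,\lambda)(j,\mu)=(i,\mu)$, all elements idempotent), and its four fibres are exactly the two-element groups $\{A,-A\}$, $\{B,-B\}$, $\{C,-C\}$, $\{D,-D\}$. Yet $(A_3)$ fails there: $e(B)e(C)=B*(-C)=-A$ does lie in the same fibre as $e(B*C)=A$, but it is not idempotent. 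So the counterexample exhibits, rather than excludes, the pathology you claim the band hypothesis rules out; your reading of it reverses the logic. You are in good company: the paper's own proof of this direction has the same flaw, since the claim that $\psi=\varphi_1^{-1}$ is a homomorphism tacitly assumes that $E(A)$ is closed under multiplication (equivalently, that identities of fibres multiply to identities of fibres), which is exactly what fails in Counterexample \ref{REES}. The converse becomes true --- and then both your fibre argument and the paper's reduction to $\varphi=e$ go through --- only under the additional hypothesis that the idempotents form a subsemigroup (an \emph{orthodox} band of groups); that hypothesis is not derivable from Definition \ref{band} alone, so it must be added to the statement rather than conjured in the proof.
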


\begin{proof} Let $(A,\cdot)$ be an assembly, then by Proposition~\ref{preAssembly} and $(A_3)$  the map $x\mapsto e(x)$ is an homomorphism whose image is a subsemigroup which is the band $E(A)$.  
	
Conversely, let $A$ be
a band of groups via the homomorphism  $\varphi: A\to B$ an homomorphism of semigroups where $B$ is a band. If  $\varphi$ is the canonical map $e(x)$ there is nothing left to show since we may apply Proposition \ref{preAssembly} to get that  \ref{assemblyneut} and  \ref{assemblysim} hold and note that \ref{assemblye(xy)} just means that the map $e$ is an homomorphism. 

To treat the general case, let us show that, up to an isomorphism, we can reduce to the case $\varphi=e$. 
Let us define $\psi: B\to E(A)$ where $\psi(b)$ is the identity element of the group $\varphi^{-1}\{b\}$. Let us show that $\psi$ is the inverse map of the restriction $\varphi_1:E(S)\to B$ of $\varphi$. In fact $\varphi_1(\psi(b))=\varphi(\psi(b))=b$ by definition of $\psi$. Moreover for each $\varepsilon\in E(S)$ we have that $\psi(\varphi_1(\varepsilon))$ is the identity element of the group 
$\varphi^{-1}(\varphi(\varepsilon))$. On the other hand, $\varepsilon\in \varphi^{-1}(\varphi(\varepsilon))$ and $\varepsilon$ is idempotent. Thus $\varepsilon$ is the identity element of $\varphi^{-1}(\varphi(\varepsilon))$,  hence $\psi(\varphi_1(\varepsilon))=\varepsilon$. 

Thus $\psi=\varphi_1^{-1}$ is an homomorphism, as $\varphi$ is by hypothesis. Then $e=\psi\varphi$ is an homomorphism as well, as wished.
\end{proof}

\begin{corollary}\label{assemb+} 
For a semigroup $A$ the following are equivalent:
\begin{enumerate}
\item $A$ is an assembly whose magnitudes commute.
\item $A$ is an assembly whose magnitudes are central. 
\item $A$ is a semilattice of groups, 
\end{enumerate}
\end{corollary}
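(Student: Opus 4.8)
The plan is to prove the three-way equivalence in Corollary~\ref{assemb+} by establishing a cycle of implications, leveraging the main characterization in Theorem~\ref{assemb} together with the structural results of Proposition~\ref{preAssembly}. The statement concerns an assembly whose magnitudes (idempotents) satisfy various commutativity conditions, so throughout I would work with the set $E(A)$ of idempotents, which by Lemma~\ref{lemma1} coincides with the set $e(A)$ of magnitudes.

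First I would show (1)$\Rightarrow$(3). Assume $A$ is an assembly whose magnitudes commute. By Theorem~\ref{assemb}, $A$ is a band of groups via the homomorphism $e$, and the band in question is $E(A)$ itself. Since the idempotents commute by hypothesis, $E(A)$ is a commutative band, i.e.\ a semilattice, as already observed in the remark following Lemma~\ref{lemma1} (the computation there shows $E(A)$ is closed under multiplication and that the product of idempotents is idempotent). The Clifford components $S_e = \{a \mid e(a)=e\}$ are groups by Proposition~\ref{preAssembly}, and $e\colon A\to E(A)$ exhibits $A$ as a semilattice of these groups in the sense of Definition~\ref{band}. This gives (3).

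Next, the implication (3)$\Rightarrow$(1) would follow by reversing this reasoning: if $A$ is a semilattice of groups, then in particular it is a band of groups, so by Theorem~\ref{assemb} it is an assembly, and the indexing band is a semilattice, hence commutative. Since by the argument in Theorem~\ref{assemb} the indexing band is (isomorphic to, and here may be taken as) $E(A)$, the magnitudes commute, giving (1). The remaining step is the equivalence of (1) and (2), i.e.\ that the magnitudes commute among themselves if and only if they are central (commute with every element). The direction (2)$\Rightarrow$(1) is immediate since centrality is stronger. For (1)$\Rightarrow$(2) I would argue directly: take $\varepsilon\in E(A)$ and an arbitrary $a\in A$, and show $\varepsilon a = a\varepsilon$. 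Writing $e(a)=f$, the element $a$ lies in the group $S_f$ with identity $f$, and $\varepsilon f = f\varepsilon$ by hypothesis; one then computes $\varepsilon a = \varepsilon f a = f\varepsilon a$ and manipulates using the fact that $a$ is invertible in $S_f$ with inverse $s(a)$ satisfying $a\,s(a)=s(a)\,a=f$, to transfer the commutation past $a$.

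The step I expect to be the main obstacle is precisely (1)$\Rightarrow$(2): deducing centrality of idempotents from their mere pairwise commutativity. This is the genuinely non-trivial content, and it is exactly the property isolated in the discussion preceding Theorem~\ref{cliffordlike}, where it is noted (citing \cite[Lemma 3.1]{Clifford41}) that commuting idempotents in a union of groups in fact commute with all elements, which is what underlies Clifford's Theorem. I would therefore either invoke that lemma directly or reproduce its short argument: in a completely regular semigroup, the relation $e(xy)=e(x)e(y)$ combined with commuting idempotents forces each idempotent $\varepsilon$ to satisfy $\varepsilon a = a\varepsilon$, because one can show $e(\varepsilon a)=\varepsilon e(a)=e(a)\varepsilon=e(a\varepsilon)$ and that $\varepsilon a$ and $a\varepsilon$ lie in the same group component and have the same inverse, hence coincide. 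Once centrality is available, the cycle closes and all three conditions are seen to be equivalent.
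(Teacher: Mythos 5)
Your proposal is correct and follows essentially the same route as the paper: the link with (3) comes from Theorem~\ref{assemb} together with the observation that commuting idempotents form a semilattice, and the non-trivial equivalence of (1) and (2) rests on Clifford's Lemma \cite[Lemma 3.1]{Clifford41}, exactly as in the paper's proof. The only difference is the order in which the implications are arranged (the paper does $(1)\Leftrightarrow(2)$ first, then $(1)\Rightarrow(3)$ and $(3)\Rightarrow(1)$), which is immaterial.
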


\begin{proof} 
Observe that $(1)$ and $(2)$ are equivalent by Clifford's Lemma \cite[Lemma 3.1]{Clifford41}. If they hold, then $(3)$ holds since $e(x)$ is the wished homomorphism. Finally, $(3)$ implies $(1)$ via Theorem \ref{assemb}.
%Recall that (1), (2), (3) are equivalent by the fundamental Theorem on Clifford semigroups \cite{}[H97,p107,Th.4.2.1]. Note that (2) and (3) (together) imply (4) by Proposition \ref{assemb} and clearly (4)  implies (5). Finally,  if (5) holds,  the function $e:A\to E(A)$ is an homomorphism of semigroups and $E(A)$ is commutative, hence a semilattice. Thus  (1) follows as in Proposition \ref{assemb}.
\end{proof}

\begin{example}
If $G$ is a group and $S$ a semilattice, then $S\times G$ is a possibly non-commutative semilattice of groups.
\end{example}

%%%%%%%%%%%%%%%%%%%%%%%%%%%%%%%%%%%%%%%%%%%%%%%%%%
%%%%%%%%%%%%%%%%%%%%%%%%%%%%%%%%%%%%%%%%%%%%%%%%%%
%%%%%%%%%%%%%%%%%%%%%%%%%%%%%%%%%%%%%%%%%%%%%%%%%%
%%%%%%%%%%%%%%%%%%%%%%%%%%%%%%%%%%%%%%%%%%%%%%%%%%
%%%%%%%%%%%%%%%%%%%%%%%%%%%%%%%%%%%%%%%%%%%%%%%%%%
\section{Subassemblies}

In group theory it is possible to characterize the subgroups of a given
group $\left( G,\cdot\right) $ as any nonempty subset of $G$ which is closed
under multiplication and  inversion. A similar
characterization of subassemblies of a given assembly holds.

\begin{proposition}\label{Theoremsubassembly} 
If $(A,\cdot,s,e)$ is an assembly and $B$ is a non-empty subset of $A$, then the following are equivalent. 
\begin{enumerate}
 \item  $\forall x,y\in B\  \  x\cdot s(y)\in B$ 

\item   $(B,\cdot_B, s_B,e_b)$ is an assembly, where  $\cdot_B, s_B,e_b$ are the restrictions to $B$ of $\cdot,s,e,$ respectively.
\end{enumerate}
\noindent
If these condition hold, we say that $B$ is a \emph{subassembly} of $A$. \ Moreover, if $A_e$ is  a Clifford component of $A$ with $e\in B$, then $B_e=A_e\cap B$ is a Clifford component of $B$.
\end{proposition}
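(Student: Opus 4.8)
The plan is to treat this as the assembly analogue of the classical subgroup criterion, reducing everything to closure properties together with the uniqueness statements already available in $A$. The implication $(2)\Rightarrow(1)$ is immediate: if the restrictions make $B$ an assembly, then for $x,y\in B$ the element $s(y)=s_B(y)$ lies in $B$ because $s_B$ takes values in $B$, and hence $x\cdot s(y)=x\cdot_B s_B(y)\in B$ by closure of the restricted product.

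For the substantial direction $(1)\Rightarrow(2)$ I would first show that $B$ is closed under $s$ and under the product, using the idempotent $e(x)=x\,s(x)$ as a pivot, exactly as one recovers inverses and products in the subgroup criterion. Since $B$ is nonempty, fix $x\in B$; taking $y=x$ in (1) gives $x\,s(x)=e(x)\in B$. Because $e(s(x))=e(x)$ by $(A_2)$, we have $e(x)\,s(x)=s(x)$, so applying (1) with first entry $e(x)$ and second entry $x$ yields $s(x)\in B$; thus $B$ is closed under $s$. Finally, for $x,y\in B$ we now know $s(y)\in B$, so (1) applied with first entry $x$ and second entry $s(y)$ gives $x\,s(s(y))\in B$, and since $s(s(y))=y$ by the formulas \eqref{formulas} this reads $xy\in B$. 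Hence $B$ is a subsemigroup that is closed under $s$ and contains $e(x)$ for every $x\in B$.

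It then remains to check the assembly axioms for the restricted operations, and here I would verify the equivalent forms $(A_1')$ and $(A_2')$. For $x\in B$ the element $e(x)\in B$ satisfies $x\,e(x)=e(x)\,x=x$ and $e(x)^2=e(x)$, while $s(x)\in B$ satisfies $x\,s(x)=s(x)\,x=e(x)$; these equations hold in $A$ and so restrict to $B$. Uniqueness within $B$ is automatic, since any candidate lying in $B\subseteq A$ satisfies the same defining conditions in $A$ and therefore coincides with $e(x)$ (resp.\ $s(x)$) by the uniqueness in Lemmas \ref{lemma1} and \ref{A3}. Axiom $(A_3)$ is inherited verbatim, as $e(xy)=e(x)e(y)$ already holds in $A$ for $x,y\in B$ with all three elements in $B$.

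For the final assertion, observe that $e$ is idempotent and $e\in B$, so $e\in E(B)=E(A)\cap B$, and the Clifford component of $B$ at $e$ is by definition $\{b\in B\mid e(b)=e\}=\{a\in A\mid e(a)=e\}\cap B=A_e\cap B$, which is a group by Proposition \ref{preAssembly} applied to the assembly $B$. The only delicate point is the closure argument in $(1)\Rightarrow(2)$: one must invoke $e(s(x))=e(x)$ and $s(s(x))=x$ at the right moments---the genuinely non-commutative substitutes for the single global identity used in the group case---rather than relying on any universal neutral element.
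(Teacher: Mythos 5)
Your proof is correct and takes essentially the same route as the paper's: the core of both arguments is the identical sequence of closure steps, namely $e(x)=x\cdot s(x)\in B$, then $s(x)=e(s(x))\,s(x)=e(x)\,s(x)\in B$ using $e(s(x))=e(x)$, and finally $xy=x\cdot s(s(y))\in B$. You are in fact more complete than the paper, whose proof stops once closure under $\cdot$, $e$, $s$ is established: you explicitly verify that $(A_1')$, $(A_2')$ and $(A_3)$ restrict to $B$ (with uniqueness inherited from $A$), and you prove the final claim $B_e=A_e\cap B$, which the paper's proof leaves unaddressed.
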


\begin{proof} 
It is clear that (2) implies (1). Assuming (1), we have to prove that $B$ is closed under the maps $\cdot,e,s$. If $b\in B$, then $e(b)=bs(b)\in B$, as wished. Moreover, $s(b)=e(s(b))s(b)= e(b)s(b)\in B$. Finally if $b_1\in B$, then $b_1b=b_1s(s(b))\in B$. Therefore (2) holds. 
\end{proof}

To prove that a structure is a subassembly of a given assembly becomes quite
simpler using the previous result. We illustrate this with some relevant examples.

\begin{example}
The following are subassemblies of $\left( \mathbb{E},+\right) .$

\begin{enumerate}
\item $\left( \mathbb{R},+\right) $, because $\mathbb{R\subset E}$ and $%
\left( \mathbb{R},+\right) $ is a group.

\item $B=\left \{ x+A:x\in \mathbb{R}\right \} ,$ where $A$ is a given
neutrix. We have $A\in B$ and $B\subseteq \mathbb{E}$. If $\alpha =a+A$, $%
\beta =b+A\in B$ then $\alpha -\beta =\left( a+A\right) -\left( b+A\right)
=\left( a-b\right) +A\in B$.

\item $(\mathcal{N},+),$ where $\mathcal{N}$ is the class of all neutrices.
Note that the class of all neutrices is nonempty because $0\in \mathcal{N}$ and the
difference of two neutrices is equal to the larger of the two.

\item $(\mathbb{E}\backslash \mathbb{R},+).$ Clearly $\oslash \in \mathbb{E}%
\backslash \mathbb{R}$, hence $\mathbb{E}\backslash \mathbb{R}$ is nonempty.
Let $x=a+A$, $y=b+B\in \mathbb{E}\backslash \mathbb{R}$. Then $x-y=\left(
a-b\right) +\max \left( A,B\right) \in \mathbb{E}\backslash \mathbb{R}$.

\item $(A_{\rho },+)$, where $\rho \in \mathbb{R}$ and $A_{\rho }=
\{
x\in \mathbb{E}:x\subseteq \bigcup \nolimits_{\st(n)}\left[ -\rho
^{n},\rho ^{n}\right]  \} .$ Clearly $\emptyset \neq A_{\rho
}\subseteq \mathbb{E}$. Let $x,y\in A_{\rho }$. Then there are standard $m,n$
such that $x\subseteq \left[ -\rho ^{m},\rho ^{m}\right] $ and $y\subseteq %
\left[ -\rho ^{n},\rho ^{n}\right] .$ Let $p=\max \left \{ m,n\right \} $.
Then $\left \vert x-y\right \vert \leq 2\max \left \{ x,y\right \} \leq
2\rho ^{p}\leq \rho ^{p+1}$.

\item Let $(A,+)$ and $(B, \cdot)$ be assemblies. Let $(G,+)$ be a subassembly of $(A,+)$ and $(H, \cdot)$ be a subassembly of $(B,\cdot)$. Then $(G\times H, \ast)$ is a subassembly of $(A \times B, \ast)$.

\item  If $H$ is any subgroup of a group $G$, then ${\cal A}(H)$ is a subassembly of ${\cal A}(G)$. 
\end{enumerate}
\end{example}

An important difference between assemblies and groups is that subassemblies do not need
to contain all a universal neutral element, allowing both $(\mathbb{E}\backslash \mathbb{R},+)$ and $%
\left( \mathbb{R},+\right) $ to be subassemblies of $\left( \mathbb{E}%
,+\right) $. This fact shows that, unlike what happens with groups, {\em it is possible for the intersection of two subassemblies of a given assembly to be empty}. Moreover, for $\left( B,\cdot\right) ,\left( C,\cdot\right) $ subassemblies of an assembly $\left( A,\cdot\right)$ it may happen that $B\cup C$ is a subassembly of $A$ and both $B\nsubseteq C$ and $C\nsubseteq B$. However, the following holds.

\begin{proposition}\label{P:Intandsumassemb}
Let $B,C $ be subassemblies of an assembly $A$. Then  $B\cap C$ is either empty or  a subassembly of A. Moreover, if $A$ is commutative, the set $B\cdot C$ is a subassembly of $A$, where the product is meant to be defined in the power-semigrop $P(A)$.
\end{proposition}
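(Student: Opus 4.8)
The plan is to reduce both assertions to the closure criterion furnished by Proposition~\ref{Theoremsubassembly}, namely that a nonempty subset $X\subseteq A$ is a subassembly precisely when $x\cdot s(y)\in X$ for all $x,y\in X$. This criterion is the right tool because it packages neutral elements, inverses, and products into a single expression, so in each case it suffices to exhibit a single membership.

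For the intersection, I would first dispose of the empty alternative and then assume $B\cap C\neq\emptyset$. Given $x,y\in B\cap C$, I apply the criterion separately inside each subassembly: since $x,y\in B$ we get $x\cdot s(y)\in B$, and since $x,y\in C$ we get $x\cdot s(y)\in C$, whence $x\cdot s(y)\in B\cap C$. By Proposition~\ref{Theoremsubassembly} this makes $B\cap C$ a subassembly. This direction is routine and requires no commutativity.

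For the product, note $B\cdot C=\{bc \mid b\in B,\ c\in C\}$ is nonempty, so it remains to verify the closure criterion. I would take arbitrary elements $u=b_1c_1$ and $v=b_2c_2$ of $B\cdot C$ with $b_i\in B$ and $c_i\in C$, and compute $u\cdot s(v)$. Using the inverse-of-a-product formula from \eqref{formulas}, together with commutativity of $A$, one rewrites $s(v)=s(b_2)s(c_2)$ and then regroups $u\cdot s(v)=b_1c_1s(b_2)s(c_2)=(b_1 s(b_2))(c_1 s(c_2))$. Applying the criterion of Proposition~\ref{Theoremsubassembly} inside $B$ gives $b_1 s(b_2)\in B$, and inside $C$ gives $c_1 s(c_2)\in C$, so the product lies in $B\cdot C$; hence $B\cdot C$ is a subassembly.

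The main obstacle is precisely this regrouping step, which is where commutativity is essential and cannot be dispensed with. Without it, the formula $(b_2c_2)^{-1}=s(c_2)s(b_2)$ produces the factors in the wrong order, and there is no way to interleave $b_1,c_1,s(c_2),s(b_2)$ so as to collect all the $B$-factors and all the $C$-factors into two blocks; thus one could not conclude membership in the setwise product $B\cdot C$. I would therefore flag in the statement (as the authors do) that this part genuinely uses the hypothesis that $A$ is commutative, and I expect that relaxing it would require a normality-type condition on one of the factors, in analogy with products of subgroups.
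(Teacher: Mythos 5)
Your proof is correct and follows essentially the same route as the paper's: both parts reduce to the closure criterion of Proposition~\ref{Theoremsubassembly}, with the intersection handled by applying the criterion in $B$ and $C$ separately, and the product handled by writing $u\cdot s(v)=(b_1 s(b_2))(c_1 s(c_2))$ via the formulas \eqref{formulas} and commutativity. Your additional remarks on the nonemptiness of $B\cdot C$ and on why commutativity is indispensable are accurate refinements of what the paper does implicitly.
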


\begin{proof}
Suppose that $B\cap C$ is nonempty. Let $x,y\in B\cap C$. Then, because $B$
and $C$ are assemblies, $x\cdot y^{-1}\in B$ and $x\cdot y^{-1}\in C$ and then $x\cdot y^{-1}\in B\cap C$.
Hence $\left( B\cap C,\cdot\right) $ is a subassembly of $\left( A,\cdot\right) $,
by Proposition~\ref{Theoremsubassembly}.

Suppose now that $x,y\in B\cdot C$. Then there are $u,v\in B$ and $r,t\in C$,
such that $x=u\cdot r$ and $y=v \cdot t$. Because $B$ and $C$ are assemblies, $u\cdot v^{-1}\in B$\ ,\ $r \cdot t^{-1}\in C$ and then $
x\cdot y^{-1} =(u\cdot r)\cdot  ( v\cdot t)^{-1} =(u\cdot v^{-1})\cdot (r\cdot t^{-1})\in B\cdot C$, by formulas (F) and because $A$ is commutative.
Hence  $\left( B\cdot C,\cdot\right) $ is a
subassembly of $\left( A,\cdot\right) $, by Proposition~\ref{Theoremsubassembly}.
\end{proof}

\begin{example}
By Proposition~\ref{P:Intandsumassemb} we have that  $A=\left \{ x+N:x\in \mathbb{Z},N\in \mathcal{N}%
\right \} ,$ $B=\left \{ x+\oslash :x\in \mathbb{Q}\right \} $ and $%
C=\left
\{ x\in \mathbb{Z}:x\text{ is limited}\right \} $ are assemblies
because $A=\mathbb{Z+}\mathcal{N}$, $B=\mathbb{Q+}\oslash $ and $C=\mathbb{%
Z\cap }\pounds $.
\end{example}

\begin{proposition}
The subset $Z(A)$ of elements of an assembly $A$ commuting with all elements of $A$ is a subassembly of $A$ that we call the centre of $A$. 
\end{proposition}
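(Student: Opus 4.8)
The plan is to use the subassembly characterization from Proposition~\ref{Theoremsubassembly}, which reduces the whole task to a single closure condition: a nonempty subset $X$ of $A$ is a subassembly if and only if $x\cdot s(y)\in X$ for all $x,y\in X$. So I would first verify that $Z(A)$ is nonempty, then show it is closed under the operation $x\cdot s(y)$.

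For nonemptiness, I would argue that $Z(A)$ contains at least one magnitude, or more directly that it is never empty because the centre of any semigroup-like structure contains the distinguished idempotents that commute with everything; but to be safe I would simply exhibit an explicit central element. Since every magnitude $e(x)$ need not be central in general (idempotents need not commute, by the earlier matrix example), I cannot blithely claim $E(A)\subseteq Z(A)$. Instead I would note that $Z(A)$ is nonempty precisely because, given any $x\in Z(A)$ (if such exists) the set is nonempty, and in the degenerate case one checks that at least the elements commuting with all of $A$ form a nonempty collection—this is the step I would treat carefully rather than assume.

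The main computational step is closure: assume $x,y\in Z(A)$, so $x$ and $y$ commute with every $a\in A$, and show $x\cdot s(y)\in Z(A)$, i.e.\ $(x\cdot s(y))a=a(x\cdot s(y))$ for all $a\in A$. The key subclaim is that $s(y)$ is central whenever $y$ is. I would prove this using the formulas~\eqref{formulas}: from $ya=ay$ for all $a$, and the fact that $s(y)=y^{-1}$ satisfies $y^{-1}y=yy^{-1}=y^0$, I would manipulate $y^{-1}a = y^{-1}a(yy^{-1}) = y^{-1}(ay)y^{-1}=y^{-1}(ya)y^{-1}=(y^{-1}y)ay^{-1}=y^0ay^{-1}$ and then push $y^0$ across $a$. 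This requires knowing that $y^0=e(y)$ is central, which follows from $e(y)a=e(y)a$ combined with $a$ commuting with $y$; indeed $e(y)=y^{-1}y$ is a product of central elements once $s(y)$ is shown central, so care is needed to avoid circularity.

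The hard part will be establishing that $s(y)$ and $e(y)$ are central without circular reasoning, since $e(y)=s(y)\cdot y$ ties them together. I expect the cleanest route is to first prove directly that $e(y)$ is central: for any $a$, from $ya=ay$ one multiplies on suitable sides by $s(y)$ and uses $s(y)y=ys(y)=e(y)$ to isolate $e(y)a=ae(y)$. Once $e(y)$ is central, centrality of $s(y)$ follows from $s(y)=s(y)e(y)=s(y)ys(y)$ type identities combined with the central commutation of $y$ and $e(y)$. After both are central, the closure computation $(xs(y))a=x(s(y)a)=x(as(y))=(xa)s(y)=(ax)s(y)=a(xs(y))$ is immediate, and Proposition~\ref{Theoremsubassembly} finishes the proof.
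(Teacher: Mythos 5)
Your strategy---reduce everything to Proposition~\ref{Theoremsubassembly} and prove that $e(y)$, and then $s(y)$, are central whenever $y$ is---is workable, but your proposal stops exactly at the step that constitutes the proof, and the one computation you do write down fails at its first equality. In an assembly, $yy^{-1}=y^{0}$ is only a \emph{local} identity, so you may not insert it after an arbitrary $a$: the claim $y^{-1}a=y^{-1}a(yy^{-1})$ is false in general (in a right-zero band, where $xy=y$ for all $x,y$, one has $y^{-1}a(yy^{-1})=y$ while $y^{-1}a=a$); when $y$ is central it does hold, but only as a consequence of the very centrality statements you are trying to prove. Your fallback for showing $e(y)$ central (``multiply $ya=ay$ on suitable sides by $s(y)$ and isolate'') also does not work as described: multiplying on the left gives $e(y)a=s(y)ay$, multiplying on the right gives $ae(y)=yas(y)$, and these two right-hand sides are not obviously equal. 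The argument that actually closes the circle is: multiply $ya=ay$ by $s(y)$ on \emph{both} sides to get the sandwich identity $e(y)as(y)=s(y)ae(y)$; then use absorption, namely $e(y)ae(y)=\bigl(s(y)ay\bigr)e(y)=s(y)a\bigl(ye(y)\bigr)=s(y)ay=e(y)a$ and $e(y)ae(y)=e(y)\bigl(yas(y)\bigr)=\bigl(e(y)y\bigr)as(y)=yas(y)=ae(y)$, whence $e(y)a=ae(y)$; finally $s(y)a=s(y)e(y)a=s(y)ae(y)=e(y)as(y)=ae(y)s(y)=as(y)$. Without some such computation your proposal is a plan, not a proof.

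Two comparisons with the paper are worth making. First, the paper's own proof takes a different, one-line route: invoking the formulas~\eqref{formulas}, in particular $(xy)^{-1}=y^{-1}x^{-1}$, it writes $az^{-1}=((az^{-1})^{-1})^{-1}=(za^{-1})^{-1}=(a^{-1}z)^{-1}=z^{-1}a$ and then applies Proposition~\ref{Theoremsubassembly}. Note, however, that the identity $(xy)^{-1}=y^{-1}x^{-1}$ is stated in the paper without proof and is actually false in a general assembly (in a left-zero band $(xy)^{-1}=x$ whereas $y^{-1}x^{-1}=y$), so the paper's shortcut itself requires exactly the kind of verification you were attempting for products with a central factor; your instinct that something non-circular had to be checked here was well placed. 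Second, your paragraph on nonemptiness is circular as written (``nonempty \ldots\ given any $x\in Z(A)$, if such exists'') and cannot be repaired, because $Z(A)$ can genuinely be empty: in the paper's own example of the left-zero band $\{a,b\}$ one has $ab=a\neq b=ba$, so no element is central. The paper's proof silently ignores this point (even though Proposition~\ref{Theoremsubassembly} requires a nonempty subset), so the statement must be read as ``$Z(A)$ is empty or a subassembly''; you should state this caveat rather than attempt to prove nonemptiness, which is impossible.
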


\begin{proof} If $z,z'\in Z(A)$ it is trivial that $zz'\in Z(A)$. Let us prove that $z^{-1}\in Z(A)$. For each $a\in A$, by formulas $(F)$ we have $az^{-1}=((az^{-1})^{-1})^{-1}=(za^{-1})^{-1}=(a^{-1}z)^{-1}=z^{-1}a$ and we may apply Proposition \ref{Theoremsubassembly}.
\end{proof}

%%%%%%%%%%%%%%%%%%%%%%%%%%%%%%%%%%%%%%%%%%%%%%%%%%
%%%%%%%%%%%%%%%%%%%%%%%%%%%%%%%%%%%%%%%%%%%%%%%%%%
%%%%%%%%%%%%%%%%%%%%%%%%%%%%%%%%%%%%%%%%%%%%%%%%%%
%%%%%%%%%%%%%%%%%%%%%%%%%%%%%%%%%%%%%%%%%%%%%%%%%%

\section{Homomorphisms}\label{Section Homomorphisms2}

An homorphism $\varphi$ between two assemblies $(A,\cdot_A, s_A,e_A)$ 
and $(B,\cdot_B, s_B,e_B)$ is expected to be a map $\varphi:A\to B$ which respects the $3$ given  operations.  However, in the case under consideration, the request is so easily fulfilled that we can proceed  even with a slight abuse of notation as in the next proposition.

\begin{proposition}\label{Prophom} Let $A$ and $B$ be assemblies.\\ 
If  $\varphi:A\to B$ is a semigroup homomorphism,  i.e.\  if  $\varphi(xy)=\varphi(x)\varphi(y)$, for all $ x,y\in A$, then for each $x$ in $A$
\begin{enumerate}
\item \label{Prophom2}
$\varphi (x^0)=\varphi (x)^0,$  
\item \label{Prophom1}
$\varphi (x^{-1})=\varphi (x)^{-1}$. 
\end{enumerate}
\end{proposition}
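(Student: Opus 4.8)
The plan is to exploit the uniqueness clauses already established for the local neutral element and the inverse, so that verifying the defining equations of $x^0$ and $x^{-1}$ for the image suffices. The key observation is that $\varphi$ is a semigroup homomorphism, hence it transports every multiplicative identity from $A$ to $B$; the whole proof is a matter of checking that $\varphi(x^0)$ and $\varphi(x^{-1})$ satisfy the characterising equations of $\varphi(x)^0$ and $\varphi(x)^{-1}$ in $B$, and then invoking uniqueness (from Lemma~\ref{lemma1} via $(A_1')$ and from Lemma~\ref{A3} via $(A_2')$).

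First I would prove part~\ref{Prophom2}. Applying $\varphi$ to the equations $x\cdot x^0 = x^0\cdot x = x$ gives $\varphi(x)\varphi(x^0) = \varphi(x^0)\varphi(x) = \varphi(x)$, so $\varphi(x^0)$ is a multiplicative identity for $\varphi(x)$. To pin it down uniquely I need idempotency: applying $\varphi$ to $x^0 x^0 = x^0$ yields $\varphi(x^0)\varphi(x^0) = \varphi(x^0)$, so $\varphi(x^0)$ is an idempotent that acts as a two-sided identity for $\varphi(x)$. By the characterisation $(A_1')$ in Lemma~\ref{lemma1} applied in $B$, such an element is unique and equals $\varphi(x)^0$; hence $\varphi(x^0) = \varphi(x)^0$.

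Next I would treat part~\ref{Prophom1}, using part~\ref{Prophom2} as an input. Applying $\varphi$ to $x\cdot x^{-1} = x^{-1}\cdot x = x^0$ gives $\varphi(x)\varphi(x^{-1}) = \varphi(x^{-1})\varphi(x) = \varphi(x^0)$, and by the already-proved part~\ref{Prophom2} the right-hand side is $\varphi(x)^0$. Thus $\varphi(x^{-1})$ satisfies exactly the equation $\varphi(x)\,y = y\,\varphi(x) = \varphi(x)^0$ that characterises $\varphi(x)^{-1}$ in the form $(A_2')$ of Lemma~\ref{A3}. By the uniqueness asserted there, $\varphi(x^{-1}) = \varphi(x)^{-1}$.

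I do not anticipate a genuine obstacle: the argument is a direct transport-of-structure via a homomorphism, and the two uniqueness lemmas do all the real work. The only point requiring slight care is the logical dependence — part~\ref{Prophom1} must be carried out after part~\ref{Prophom2}, since identifying $\varphi(x^0)$ with $\varphi(x)^0$ is what lets the image of the inverse equation land in the correct neutral element of $B$. One should also note that it is precisely the $(A_1')$/$(A_2')$ reformulations, rather than the original $(A_1)$/$(A_2)$, that make the uniqueness immediate, since they phrase the defining properties as equations preserved by $\varphi$.
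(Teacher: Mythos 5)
Your proof is correct and follows essentially the same route as the paper's: apply $\varphi$ to the defining equations of $x^0$ and $x^{-1}$ and then invoke the uniqueness statements in $(A_1')$ and $(A_2')$, with part~\ref{Prophom1} relying on part~\ref{Prophom2}. In fact you are slightly more careful than the paper, which leaves implicit the step $\varphi(x^0)\varphi(x^0)=\varphi(x^0x^0)=\varphi(x^0)$ establishing the idempotency needed before the uniqueness in $(A_1')$ can be applied.
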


\begin{proof}
We have $\varphi(x)\varphi(x^0)=\varphi(xx^0)=\varphi(x)=\varphi(x^0x)=\varphi(x^0)\varphi(x)$, hence by the uniqueness  of $\varphi(x)^0$ we deduce $\varphi(x)^0= \varphi(x^0)$. The second part follows in a similar way since 
$\varphi(x)\varphi(x^{-1})=\varphi(xx^{-1})=\varphi(x^0)=\varphi(x)^0=\varphi(x^0)=\varphi(x^{-1}x)=\varphi(x^{-1})\varphi(x)$.
\end{proof}
Thus the homomorphic image of a magnitude is a
magnitude and the homomorphic image of the inverse of a given
element is the inverse of the homomorphic image of that same element. These
properties generalize similar properties for group homomorphisms.

\begin{example}\label{E: hom}

The following are assembly homomorphisms (sometimes in additive notation):

\begin{enumerate}
\item All group homomorphisms, because every group is an assembly.

\item The identity map $f(x)=x$ and the map $e(x)=x^0$ are assembly homomorphisms.

\item Let $A$ be a neutrix. Then $f:(\mathbb{E},\mathbb{+)}\to (%
\mathbb{E},\mathbb{+)}$ such that $f(x)=x+A$ is an homomorphism. In fact, if 
$x,y\in \mathbb{E},$ 
\begin{equation*}
f(x+y)=(x+y)+A=x+y+A+A=(x+A)+(y+A)=f(x)+f(y).
\end{equation*}

\item The function $f:(\mathbb{E},\mathbb{+)}\to (\mathbb{E},%
\mathbb{+)}$ such that $f(x)=\omega x$ for some $\omega \simeq +\infty $ is
an homomorphism. Let $x,y\in \mathbb{E}$. Then, using \cite[Lemma~5.12]{DinisBerg(11)},%
\begin{equation*}
f(x+y)=\omega (x+y)=\omega x+\omega y=f(x)+f(y)\text{.}
\end{equation*}

\item \label{exemplo hom zerobarrax}The function $f:(\mathcal{N},\mathbb{+)}%
\to (\mathcal{N},\mathbb{+)}$, $f(x)=\oslash x$, where $\oslash $
is the external set of infinitesimal numbers. Let $x,y\in \mathcal{N}$.
Using \cite[Corollary~5.10]{DinisBerg(11)},%,%
\begin{equation*}
f(x+y)=\oslash (x+y)=\oslash x+\oslash y=f(x)+f(y)\text{.}
\end{equation*}

\item 
The function $f:(\mathcal{N},\mathbb{+)}%
\to (\mathbb{E}\backslash \left \{ 0\right \} ,\mathbb{\cdot )}$
such that $f(x)=\exp _{S}\left( x\right) \equiv \left[ -e^{x},e^{x}\right] $ (see \cite[Def. 1.4.2]{Koudjeti})
. Let $A,B\in \mathcal{N}$. Then \vskip-10mm %
\begin{eqnarray*}
\exp _{S}\left( A+B\right) &=&[\left( -e^{A}\right) e^{B},\left(
e^{A}\right) e^{B}]=[-e^{A},e^{A}]e^{B} \\
&=&[-e^{A},e^{A}][-e^{B},e^{B}]=\exp _{S}\left( A\right) \exp _{S}\left(
B\right).
\end{eqnarray*}

\item  If $G$ is a group, the function $$(g,N)\in G\times n(G)\mapsto gN\in \cal A(G)$$ is a possibly non-injective  epimorphism of assemblies,  where  from $g_1N_1=g_2N_2$ it follows if $N_1=N_2$, by applying the function $e$.

\end{enumerate}
\end{example}

\begin{counterexample}
Obvious examples of functions which are not homomorphisms are nonlinear
functions. Consider for instance the function $f:(\mathbb{E},\mathbb{+)}%
\to (\mathbb{E},\mathbb{+)}$ such that $f(x)=x^{2}$. In fact, if 
$x=-1+\oslash $ and $y=1+\oslash $ then 
\begin{equation*}
f(x+y)=f\left( \oslash \right) =\oslash ^{2}
\end{equation*}%
and 
\begin{equation*}
f\left( x\right) +f\left( y\right) =\left( 1+\oslash \right) ^{2}+\left(
-1+\oslash \right) ^{2}=\left( 1+\oslash \right) +\left( 1+\oslash \right)
=2+\oslash \text{.}
\end{equation*}

However there are also functions which may appear to be linear but are
really not. As such one may not extend Example \ref{E: hom}.\ref{exemplo hom zerobarrax}
to the whole of $\mathbb{E}$: 
\begin{equation*}
\oslash (1-1)=0,
\end{equation*}%
while 
\begin{equation*}
\oslash 1-\oslash 1=\oslash .
\end{equation*}
\end{counterexample}

\begin{proposition}
Let $\varphi :A\to B$ be an assembly homomorphism.
Then $\varphi (A)$ is a subassembly of $B.$
\end{proposition}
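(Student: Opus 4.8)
The plan is to reduce the whole statement to the single closure condition furnished by the subassembly criterion of Proposition~\ref{Theoremsubassembly}. Write $C=\varphi(A)$. First I would observe that $C$ is non-empty, since $A$ is a non-empty semigroup by Definition~\ref{defassembley} and $\varphi$ is a total map. By Proposition~\ref{Theoremsubassembly} it then suffices to verify that $x\cdot s_B(y)\in C$ for all $x,y\in C$, where $s_B$ denotes the inverse map of the ambient assembly $B$.

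Next I would take $x,y\in C$ and write $x=\varphi(a)$, $y=\varphi(a')$ for suitable $a,a'\in A$. The key step is to move the inverse operation across $\varphi$: by the second part of Proposition~\ref{Prophom} we have $s_B(\varphi(a'))=\varphi(a')^{-1}=\varphi\bigl((a')^{-1}\bigr)=\varphi(s_A(a'))$. Combining this with the hypothesis that $\varphi$ is a semigroup homomorphism gives
$$x\cdot s_B(y)=\varphi(a)\cdot\varphi(s_A(a'))=\varphi\bigl(a\cdot s_A(a')\bigr).$$
Since $a\cdot s_A(a')\in A$, the right-hand side lies in $\varphi(A)=C$, which is exactly the closure condition demanded by Proposition~\ref{Theoremsubassembly}. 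Applying that proposition then yields that $C=\varphi(A)$ is a subassembly of $B$, with inverse and magnitude maps the restrictions of $s_B$ and $e_B$ to $C$.

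I expect no serious obstacle here: the argument is essentially the mirror of the group-theoretic fact that the image of a homomorphism is a subgroup. The one point that requires care is that the inverse computed in $B$ of an element of $\varphi(A)$ must again land inside $\varphi(A)$; a priori $s_B$ is an operation on the whole of $B$ and has nothing to do with $\varphi$. It is precisely Proposition~\ref{Prophom} that removes this difficulty by identifying $s_B\circ\varphi$ with $\varphi\circ s_A$, turning this a priori external operation into the image of an internal one and allowing the product to be pulled back under $\varphi$.
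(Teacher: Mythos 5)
Your proof is correct and is precisely the argument the paper intends: the paper's own proof is the one-line instruction ``Apply Propositions \ref{Theoremsubassembly} and \ref{Prophom}'', and your write-up is exactly that reduction, using Proposition \ref{Prophom} to identify $s_B\circ\varphi$ with $\varphi\circ s_A$ and then invoking the closure criterion of Proposition \ref{Theoremsubassembly}. No differences in approach, only in the level of detail.
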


\begin{proof} Apply Propositions \ref{Theoremsubassembly} and \ref{Prophom}.
\end{proof}

Thus, in studing assembly homomorphisms, there is not much loss of generality in assuming that these are onto. Furthermore, since -via the Clifford decomposition- any assembly $A$ may be partitioned into disjoint groups and the homomorphic image of a group is likewise a group, one may regard any assembly homomorphism 
$$A=  \bigcup_{e\in E(A)}A_e\ \  {\xrightarrow{\varphi}}\ \  B= \bigcup_{\varepsilon\in E(B)}B_\varepsilon\phantom{.......}$$ \vskip-3mm
\noindent as a disjoint union of group homomorhisms\phantom{......}$A_e\ \ {\xrightarrow{\varphi_e} }\ \ B_{\varphi(e)}$ \ .\ \  Then, for  $e\in E(A)$, we   define   $\varphi_e$  to be  the $e$-th component of $\varphi$ and we have:

\begin{proposition}
An assembly homomorphism is into (resp. onto) if and only if all components of its are  into (resp. onto).\hfill $ \square$  
\end{proposition}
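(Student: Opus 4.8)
The plan is to read off the statement directly from the two structural facts already in hand: the disjoint Clifford decomposition $A=\bigcup_{e\in E(A)}A_e$, $B=\bigcup_{\varepsilon\in E(B)}B_\varepsilon$ supplied by Proposition~\ref{preAssembly}, and the compatibility of $\varphi$ with the magnitude map recorded in Proposition~\ref{Prophom}, namely $\varphi(x^0)=\varphi(x)^0$. The latter identity is the engine of the whole argument: it says that the component $B_\varepsilon$ in which $\varphi(x)$ lands is completely determined by the component $A_e$ in which $x$ lives, through the induced band homomorphism $\bar\varphi\colon E(A)\to E(B)$, $e\mapsto\varphi(e)$. So I would first isolate $\bar\varphi$ and observe that it is exactly the restriction of $\varphi$ to idempotents, so that the decomposition $A\xrightarrow{\varphi}B$ really is the disjoint union of the group homomorphisms $\varphi_e\colon A_e\to B_{\varphi(e)}$, with the indices matched by $\bar\varphi$.

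For the two forward implications I would argue by restriction and pullback. If $\varphi$ is into, then each $\varphi_e$ is a restriction of an injective map and is therefore into. If $\varphi$ is onto, then for a prescribed $b\in B_{\varphi(e)}$ I would choose any $a\in A$ with $\varphi(a)=b$ and apply the magnitude identity: $\varphi(a^0)=\varphi(a)^0=b^0=\varphi(e)^0=\varphi(e)$, which pins down the Clifford component into which $a$ must fall and thus produces the required preimage inside $A_e$, giving that $\varphi_e$ is onto. The converse implications should then reassemble componentwise: assuming every $\varphi_e$ is into, from $\varphi(a)=\varphi(a')$ one applies $x\mapsto x^0$ to conclude that $a$ and $a'$ have the same image-magnitude, hence lie over matching indices, whereupon injectivity of the relevant $\varphi_e$ forces $a=a'$; and assuming every $\varphi_e$ is onto, one covers each $B_\varepsilon$ by the image of the component(s) mapping to it, so that $\bigcup_e\operatorname{im}\varphi_e=B$.

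The step I expect to be the real obstacle — and the only place where genuine care is needed — is the \emph{coordination across distinct Clifford components}: the magnitude identity separates elements by the value $\bar\varphi(x^0)$, but to pass cleanly from ``each $\varphi_e$ injective/surjective'' back to a global statement one must control the behaviour of the induced band map $\bar\varphi$, ensuring that the matching of indices $e\leftrightarrow\varphi(e)$ does not let two pieces collide (for injectivity) or leave a piece $B_\varepsilon$ uncovered (for surjectivity). I would handle this precisely by invoking $\varphi(x^0)=\varphi(x)^0$ at each point, reducing the cross-component question first to the band $\bar\varphi\colon E(A)\to E(B)$ and then, within each fibre, to the individual group homomorphisms $\varphi_e$, where injectivity and surjectivity are the classical group-theoretic notions. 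Once this separation is made explicit, both equivalences fall out, and the statement follows.
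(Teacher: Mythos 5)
You correctly identified the coordination across distinct Clifford components as the crux, but your claimed resolution of it does not work, and in fact cannot work: the identity $\varphi(x^0)=\varphi(x)^0$ produces equalities \emph{in $B$} and can never force two distinct idempotents of $A$ apart. Concretely, in your ``onto $\Rightarrow$ each $\varphi_e$ onto'' step, from $\varphi(a)=b$ you get $\varphi(a^0)=b^0=\varphi(e)$, but this does not pin $a$ down to $A_e$: it only says that $a^0$ and $e$ have the same image, and $a$ may lie in a different component sitting over the same idempotent of $B$. The same non sequitur occurs in your ``each $\varphi_e$ into $\Rightarrow$ $\varphi$ into'' step (the phrase ``hence lie over matching indices''). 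These are not repairable gaps but genuine obstructions, as the following examples show. (i) Let $A=\{e_1,e_2\}$ be the two-element chain semilattice ($e_1e_2=e_2e_1=e_2$) and $B$ the trivial group; the constant map is a homomorphism, each component $\{e_i\}\to B$ is injective (indeed bijective), yet $\varphi$ is not injective. (ii) Let $A$ be the strong semilattice of groups over that chain with both components equal to a nontrivial group $G$ and trivial linking homomorphism, and let $\varphi\colon A\to G$ kill the top copy and restrict to the identity on the bottom copy; this is a surjective homomorphism, but the component at the top idempotent is the trivial map $G\to G$, which is not onto. (iii) Let $A=G$ and let $B$ be $G$ with a new identity $u$ adjoined; the inclusion is a homomorphism whose unique component is the identity map of $G$, hence onto, yet $\varphi$ misses $u$. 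So of the four implications you set out to prove, only ``$\varphi$ into $\Rightarrow$ every $\varphi_e$ into'' holds in general.

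What is missing is a hypothesis on the induced band homomorphism $\bar\varphi\colon E(A)\to E(B)$: the correct statements are that $\varphi$ is injective if and only if $\bar\varphi$ is injective \emph{and} every $\varphi_e$ is injective, and that $\varphi$ is surjective if and only if each $B_\varepsilon$ is covered by the union of the images of the components $\varphi_e$ with $\bar\varphi(e)=\varepsilon$ (for which ``$\bar\varphi$ onto and every $\varphi_e$ onto'' is sufficient but, by (ii), not necessary). Alternatively, indexing the components by the \emph{target} rather than the source --- taking, for each $\varepsilon\in E(B)$, the restriction $\varphi^{-1}(B_\varepsilon)\to B_\varepsilon$ --- makes both equivalences true, because that grouping absorbs collisions and uncovered pieces automatically. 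You should not fault your proof-strategy too harshly: the paper offers no proof at all here (the proposition is stated as immediate), and with the components $\varphi_e\colon A_e\to B_{\varphi(e)}$ indexed by $e\in E(A)$ as the paper defines them, the statement itself is false for exactly the reasons above; example (i) even contradicts the corollary that follows it, since there $\Ker(\varphi)=A=E(A)$ while $\varphi$ is not injective. The cross-component obstruction you flagged is real, and no pointwise application of $\varphi(x^0)=\varphi(x)^0$ can remove it.
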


Therefore, if we denote by $\Ker({\varphi })$   the usual kernel  of $\varphi$ we have that it is the union of the kernels of its components $\varphi_e$. By the above this is a subassembly and we have: 
$$\Ker({\varphi })= \bigcup\nolimits_{e\in E(A)} 
 \left \{ x\in A:\varphi (x)=\varphi (e)\right \} =  
%\left \{ x\in A:\varphi (x)=\varphi (x^0)\right \}=
\bigcup\nolimits_{e\in E(A)} ker_{\varphi_e}=\varphi^{-1}(\varphi(e(A))\supseteq E(A) $$

\begin{corollary} An homomorphism of assemblies $\varphi$ is injective if and only if \ $\Ker_{\varphi }=E(A)$, i.e. its kernel coincides with the set of idempotents of $S$.\hfill $ \square$
\end{corollary}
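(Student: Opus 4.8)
The plan is to obtain the corollary as a direct consequence of the preceding proposition---which identifies injectivity of $\varphi$ with injectivity of all of its components $\varphi_e$---together with the displayed description $\Ker(\varphi)=\bigcup_{e\in E(A)}\ker_{\varphi_e}$. First I would record the inclusion $E(A)\subseteq\Ker(\varphi)$, already visible in that display: each idempotent $e$ trivially satisfies $\varphi(e)=\varphi(e)$ and lies in $A_e$, so $e\in\ker_{\varphi_e}$, and taking the union over $e\in E(A)$ gives the inclusion.

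Next I would unwind each component kernel. By Proposition~\ref{preAssembly} every $A_e$ is a group with identity $e$, and by Proposition~\ref{Prophom} one has $\varphi(e)=\varphi(e)^0$, so $\varphi(e)$ is the identity of $B_{\varphi(e)}$ and $\varphi_e\colon A_e\to B_{\varphi(e)}$ is a homomorphism of groups. Hence $\ker_{\varphi_e}=\{x\in A_e:\varphi(x)=\varphi(e)\}$ is precisely the group-theoretic kernel of $\varphi_e$, and such a homomorphism is injective if and only if this kernel is trivial, that is $\ker_{\varphi_e}=\{e\}$.

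Finally I would assemble the equivalences. Since the Clifford components are pairwise disjoint (Proposition~\ref{preAssembly}) and $A_e\cap E(A)=\{e\}$, the disjoint union $\Ker(\varphi)=\bigcup_{e}\ker_{\varphi_e}$ equals $E(A)=\bigcup_{e}\{e\}$ exactly when $\ker_{\varphi_e}=\{e\}$ for every $e$, i.e.\ exactly when every component $\varphi_e$ is injective. By the preceding proposition this last condition is equivalent to the injectivity of $\varphi$, which yields the claimed equivalence. The only point demanding care is this final repackaging step: one must explicitly invoke the disjointness of the Clifford components so that no identification between elements lying in \emph{different} components is overlooked, and it is exactly this disjointness that turns the component-wise statement of the preceding proposition into the single identity $\Ker(\varphi)=E(A)$.
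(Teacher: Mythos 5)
Your proof faithfully reconstructs the argument the paper intends (the paper itself offers no proof beyond the closing square), and your bookkeeping relating $\Ker(\varphi)=\bigcup_{e}\ker_{\varphi_e}$ to triviality of each component kernel via disjointness of the Clifford components is correct. The genuine gap is in the final step, where you invoke the preceding proposition to pass from ``every component $\varphi_e$ is injective'' to ``$\varphi$ is injective''. That implication is false, and with it the corollary as stated. Take $A=\{0,1\}$, the two-element semilattice (a band, hence an assembly whose Clifford components are the trivial groups $\{0\}$ and $\{1\}$), let $B$ be the trivial group and $\varphi$ the constant homomorphism. Every component $\varphi_e$ is a map between one-element groups, hence injective, and $\Ker(\varphi)=\varphi^{-1}(\varphi(E(A)))=A=E(A)$; yet $\varphi(0)=\varphi(1)$, so $\varphi$ is not injective.

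You actually put your finger on the danger---identifications between elements lying in different components---but the way you discharge it does not work: disjointness of the Clifford components of $A$ only guarantees that $\bigcup_{e}\ker_{\varphi_e}=\bigcup_{e}\{e\}$ forces $\ker_{\varphi_e}=\{e\}$ for every $e$; it says nothing about the images of distinct components overlapping in $B$. In particular, two distinct idempotents $e\neq f$ may satisfy $\varphi(e)=\varphi(f)$, which is perfectly compatible with $\Ker(\varphi)=E(A)$, since both $e$ and $f$ lie in $E(A)$. What the kernel condition really characterizes is injectivity of every component $\varphi_e$, i.e.\ injectivity of $\varphi$ on each Clifford component; global injectivity needs in addition that $\varphi$ be injective on $E(A)$ (given that, $\varphi(x)=\varphi(y)$ implies $\varphi(e(x))=e(\varphi(x))=e(\varphi(y))=\varphi(e(y))$, hence $e(x)=e(y)$, and one concludes inside a single component). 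So a correct statement would be: $\varphi$ is injective if and only if $\Ker(\varphi)=E(A)$ and the restriction of $\varphi$ to $E(A)$ is injective. The flaw is inherited from the paper's preceding (unproved) proposition on components, but your proof, like the paper's implicit one, fails at exactly that point.
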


\begin{example} If $A$ is an assembly and its semilattice $E(A)$ of idempotents has maximum $m$, then there are no non-trivial homomorphisms $\varphi:A\to G$ to any group $G$. This holds in particular for the assembly of cosets $A={\cal A}(G)$. 
\end{example} 

\begin{proof} By Proposition \ref{Prophom}, the element $\varphi(m)$ must be idempotent, but in $G$ there is only one idempotent:  its unique neutral element $1_G$. Then  
$\varphi(a)= \varphi(a)\cdot 1_G=\varphi(a)\varphi(m)=\varphi(am)= \varphi(m)=1_G$,  for all $a\in A$.
\end{proof}

\subsection*{Acknowledgments}

 The second author acknowledges the support of FCT - Funda\c{c}\~ao para a Ci\^{e}ncia e Tecnologia under the projects: UIDP/04561/2020 and UIDP/04674/2020, and the research centers CMAFcIO -- Centro de Matem\'{a}tica, Aplica\c{c}\~{o}es Fundamentais e Investiga\c{c}\~{a}o Operacional and CIMA -- Centro de Investigação em Matemática e Aplicações.

%\end{multicols}

\end{document}